\documentclass[oneside,english]{amsart}
\usepackage[T1]{fontenc}
\usepackage[latin9]{inputenc}
\usepackage{amsthm}
\usepackage{amstext}
\usepackage{amssymb}
\usepackage{esint}
\usepackage{amscd}
\usepackage{color}
\usepackage{amsmath}
\usepackage{mathrsfs}
\makeatletter
\numberwithin{equation}{section}
\numberwithin{figure}{section}
\theoremstyle{plain}
\newtheorem{thm}{\protect\theoremname}[section]

\theoremstyle{plain}
\theoremstyle{definition}

\theoremstyle{plain}
\newtheorem{lem}[thm]{\protect\lemmaname}
\newtheorem{cor}[thm]{\protect\corollaryname}
\theoremstyle{plain}
\newtheorem{rem}[thm]{\protect\remarkname}
\theoremstyle{plain}
\makeatother

\usepackage{babel}
\providecommand{\definitionname}{Definition}
\providecommand{\lemmaname}{Lemma}
\providecommand{\theoremname}{Theorem}
\providecommand{\corollaryname}{Corollary}
\providecommand{\remarkname}{Remark}
\providecommand{\propositionname}{Proposition}
	
\DeclareMathOperator{\loc}{loc}

\DeclareMathOperator{\dv}{div}
\DeclareMathOperator{\ess}{ess}

\begin{document}

\title[Nonlinear Neumann eigenvalues]{Nonlinear Neumann eigenvalues in outward cuspidal domains with weighted measure}

\author{Alexander Menovschikov and Alexander Ukhlov}
\begin{abstract}
We consider the nonlinear Neumann eigenvalue problem in outward cuspidal domains with a weighted measure. Using composition operators on Sobolev spaces, we establish embeddings of Sobolev spaces into weighted Lebesgue spaces. These embeddings give the solvability of the Neumann spectral problem in this setting and provide estimates for the corresponding weighted Neumann eigenvalues.
\end{abstract}
\maketitle

\footnotetext{\textbf{Key words and phrases:} nonlinear Neumann eigenvalues, embedding operators, outward cuspidal domains} 
\footnotetext{\textbf{2020 Mathematics Subject Classification:} 35P15, 46E35, 30C65.}

\section{Introduction}

In this article we consider the nonlinear Neumann $(p,q)$-eigen\-va\-lue problem with a weighted measure, 
$1 < p < n$, $1 < q < p^{\ast} = {np}/{(n-p)}$:
\begin{equation}\label{pr-m}
-\operatorname{div}(|\nabla u|^{p-2}\nabla u)
= \mu_{p,q}\, w_{\gamma}\, \|u\|^{p-q}_{L^q(\Omega_{\gamma}, w_{\gamma})}\, |u|^{q-2}u
\,\, \text{in }\Omega_{\gamma}, 
\quad \frac{\partial u}{\partial\nu} = 0 \ \text{on }\partial\Omega_{\gamma},
\end{equation}
where $\Omega_{\gamma} \subset \mathbb{R}^n$ is a bounded outward cuspidal domain of the form
\[
\Omega_{\gamma} = \left\{ x = (x_1, \dots, x_n) \in \mathbb{R}^n : 0 < x_n < 1,\ 0 < x_i < x_n^{\sigma},\ i = 1, \dots, n-1 \right\}, \, \sigma \ge 1,
\]
and the weight $w_{\gamma}$ is determined by the cusp parameter $\gamma = \sigma(n-1) + 1$. 
If $\sigma = 1$, then $\Omega_{\gamma} = \Omega_n$ is an $n$-dimensional simplex. 
Such nonlinear weighted spectral problems arise in the study of non-homogeneous fluids and nonlinear elasticity; see, e.g., \cite{S97}.

The estimates of Neumann eigenvalues are a long-standing and technically challenging open problem, which goes back to G.~P\'olya and G.~Szeg\"o \cite{PS51} and has been intensively studied; see, for example, \cite{BCDL,BCT} and references therein. In the case of convex domains $\Omega \subset \mathbb{R}^n$, $n \geq 2$, the classical result by Payne and Weinberger \cite{PW} states that the first non-trivial Neumann eigenvalue of the linear Laplace operator satisfies
\[
\mu_{2,2}(\Omega) \geq \frac{\pi^2}{d(\Omega)^2},
\]
where $d(\Omega)$ denotes the Euclidean diameter of the domain. This result was extended to the nonlinear Laplace operator in \cite{ENT}.  
However, in non-convex domains, Neumann eigenvalues cannot be estimated in terms of Euclidean diameter \cite{BCDL}. This phenomenon is illustrated by Nikodym-type domains \cite{M}.

In the present article, we use an approach to the spectral problem \eqref{pr-m} based on the theory of composition operators \cite{VU04,VU05}. This method enables the analysis of the problem in outward cuspidal domains $\Omega_{\gamma}$ with weights adapted to the geometry of the cusps. Notably, composition operators on Sobolev spaces have been successfully used to obtain estimates of Neumann eigenvalues in non-Lipschitz domains; see \cite{GPU24_1,GPU24_2,GPU18,GU17}. The present consideration of the weighted Neumann $(p,q)$-spectral problem completes the picture of Neumann $(p,q)$-eigenvalues in outward cuspidal domains.

The theory of embedding operators on Sobolev spaces, generated by the composition rule $\varphi^\ast(f) = f \circ \varphi$, traces back to the theory of Sobolev embeddings \cite{GGu}. In \cite{U93, VU02} (see also \cite{V88} for the case $p = q > n$), necessary and sufficient conditions--both analytic and capacitary--were established for homeomorphisms between Euclidean domains $\varphi: \Omega \to \widetilde{\Omega}$ that induce bounded composition operators on seminormed Sobolev spaces:
\[
\varphi^{\ast}: L^{1,p}(\widetilde{\Omega}) \to L^{1,q}(\Omega), \quad 1 < q \leq p < \infty.
\]
This theory has undergone significant development in recent years; see \cite{MU24_1, MU24_2, T15, V20}.

In the present article, we study composition operators on normed Sobolev spaces
\[
\varphi^{\ast}: W^{1,p}(\widetilde{\Omega}) \to W^{1,p}(\Omega), \quad 1 < p < n,
\]
whose investigation was initiated in \cite{V05, VU02}, albeit in a preliminary and technically incomplete form. Using these operators, we establish sharp embeddings
\[
i: W^{1,p}(\Omega_{\gamma}) \hookrightarrow L^q(\Omega_{\gamma}, w_{\gamma}),
\]
where the weight $w_{\gamma}$ is determined by the cusp geometry. Based on these embedding theorems, we prove a Min-Max Principle for Neumann $(p,q)$-eigenvalues. As a consequence, we obtain the solvability of the Neumann spectral problem and derive estimates for the corresponding weighted eigenvalues in outward cuspidal domains.

The main result of the article is as follows:

\medskip
\noindent
{\it Let $\Omega_\gamma \subset \mathbb{R}^n$ be a bounded outward cuspidal domain with cusp parameter $\gamma$, where $n < \gamma < \infty$, and let $\Omega_n \subset \mathbb{R}^n$ denote an $n$-dimensional simplex. Then the first non-trivial Neumann $(p,q)$-eigenvalue of the problem \eqref{pr-m} with weight

\[
w(x) = w_{\gamma}(x) = \frac{\gamma - p}{n - p} x_n^{\frac{p(\gamma - n)}{n - p}},
\]

for $1 < p < n$ and $p \leq q < \frac{pn}{n - p}$, admits the lower bound

\[
\mu_{p,q}(\Omega_\gamma, w_\gamma) \geq C(\gamma, p, q)\, d(\Omega_n)^{-np}\, |\Omega_n|^{\frac{pq(n - 1) + n(q - 1)}{qn}},
\]

\medskip
\noindent
where $d(\Omega_n)=\sqrt{n}$ is the Euclidean diameter of the convex domain $\Omega_n$, and $C(\gamma, p, q)$ is an explicit constant (see Theorem~\ref{generalest}).}

\medskip
\noindent
The obtained results are new even for the case of Lipschitz domains.

\section{Composition operators on normed Sobolev spaces}

\subsection{Function spaces.}
Let $E$ be a measurable subset of $\mathbb{R}^n$, $n \geq 2$, and let $w: \mathbb{R}^n \to [0, +\infty)$ be a locally integrable nonnegative function, i.e., a weight. The weighted Lebesgue space $L^p(E, w)$, for $1 < p < \infty$, is defined as a Banach space of locally integrable functions $u: E \to \mathbb{R}$ endowed with the norm:
\[
\|u\|_{L^p(E, w)} := \left( \int_E |u(x)|^p w(x)~dx \right)^{\frac{1}{p}}.
\]

Let $\Omega \subset \mathbb{R}^n$ be an open set. Then the Sobolev space $W^{1,p}(\Omega)$, $1 < p < \infty$, is defined as a Banach space of locally integrable weakly differentiable functions $u: \Omega \to \mathbb{R}$ equipped with the following norm:
\[
\|u\|_{W^{1,p}(\Omega)} := \|u\|_{L^p(\Omega)} + \|\nabla u\|_{L^p(\Omega)},
\]
where $L^p(\Omega)$ is the Lebesgue space with the standard non-weighted norm. In accordance with the non-linear potential theory \cite{MH72}, we consider elements of Sobolev spaces $W^{1,p}(\Omega)$ as equivalence classes up to a set of $p$-capacity zero \cite{M}.

The local Sobolev space $W^{1,p}_{loc}(\Omega)$ is defined as follows: $u\in W^{1,p}_{loc}(\Omega)$
if and only if $u\in W^{1,p}(U)$ for every open and bounded set $U\subset  \Omega$ such that
$\overline{U}  \subset \Omega$, where $\overline{U} $ is the closure of the set $U$.

The seminormed Sobolev space $L^{1,p}(\Omega)$, $1\leq p<\infty$, is defined as a space of locally integrable weakly differentiable functions
$u:\Omega\to\mathbb{R}$ endowed with the following seminorm:
\[
\|u\|_{L^{1,p}(\Omega)}:=\|\nabla u\|_{L^{p}(\Omega)}.
\]

\subsection{Composition operator.}
Let $\Omega$ and $\widetilde{\Omega}$ be domains in the Euclidean space $\mathbb R^n$. Then a homeomorphism $\varphi:\Omega\to\widetilde{\Omega}$ generates a bounded composition operator 
\[
\varphi^{\ast}:W^{1,p}(\widetilde{\Omega})\to W^{1,q}(\Omega),\,\,\,1< q\leq p<\infty,
\]
by the composition rule $\varphi^{\ast}(u)=u\circ\varphi$, if for
any function $u \in W^{1,p}(\widetilde{\Omega})$, the composition $\varphi^{\ast}(u)\in W^{1,q}(\Omega)$
is defined quasi-everywhere in $\Omega$ and there exists a constant $C_{p,q}(\varphi;\Omega)<\infty$ such that 
\[
\|\varphi^{\ast}(u)\|_{W^{1,q}(\Omega)} \leq C_{p,q}(\varphi;\Omega)\|u\|_{W^{1,p}(\widetilde{\Omega})}.
\]

Recall that a homeomorphism $\varphi: \Omega \to \widetilde\Omega$ is called the weak $p$-quasiconformal mapping \cite{GGR95,VU98}, if $\varphi\in W^{1,p}_{loc}(\Omega)$ has finite distortion and 
$$
K_{p}^{p}(\varphi;\Omega)=\ess\sup\limits_{\Omega}\frac{|D\varphi(x)|^p}{|J(x,\varphi)|}<\infty,\,\,1\leq p < \infty.
$$
Note that if the homeomorphism $\varphi: \Omega \to \widetilde{\Omega}$ has the Luzin $N^{-1}$-property (the preimage of a set of measure zero has measure zero), then $J(x,\varphi) > 0$ for almost all $x \in \Omega$, and $\varphi$ is a mapping of finite distortion. In accordance with \cite{Z69}, homeomorphisms that have the Luzin $N^{-1}$-property are called measurable homeomorphisms, because in this case, the preimage of a Lebesgue measurable set is Lebesgue measurable.

Let us  formulate the following theorem \cite{VU02}:

\begin{thm}
\label{comp_w}
Let $\varphi: \Omega \to \widetilde\Omega$ be a measurable weak $p$-quasiconformal mapping, $1<p<n$. Then $\varphi$ generates a bounded composition operator
\[
\varphi^{\ast}:W^{1,p}(\widetilde{\Omega})\to W^{1,p}(\Omega)
\]
by the composition rule $\varphi^{\ast}(u)=u\circ\varphi$.
\end{thm}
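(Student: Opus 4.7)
The plan is to estimate the two pieces of the $W^{1,p}(\Omega)$-norm of $\varphi^{\ast}(u) = u\circ\varphi$ separately, namely
\[
\|u\circ\varphi\|_{W^{1,p}(\Omega)} = \|u\circ\varphi\|_{L^p(\Omega)} + \|\nabla(u\circ\varphi)\|_{L^p(\Omega)},
\]
and then combine the two bounds.

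For the gradient piece I would follow the standard weak $p$-quasiconformal calculation. Since $\varphi\in W^{1,p}_{\loc}(\Omega)$ is a homeomorphism of finite distortion, the chain rule applies almost everywhere and $|\nabla(u\circ\varphi)(x)| \le |\nabla u(\varphi(x))|\,|D\varphi(x)|$ a.e.\ in $\Omega$. Using the distortion bound $|D\varphi(x)|^p \le K_p^p(\varphi;\Omega)\,|J(x,\varphi)|$ together with the area formula (which applies because $\varphi$ is a homeomorphism with the Luzin $N^{-1}$-property, so $J>0$ a.e.\ and the multiplicity is identically one), one gets
\[
\int_\Omega |\nabla(u\circ\varphi)|^p\,dx \le K_p^p(\varphi;\Omega)\int_\Omega |\nabla u(\varphi(x))|^p\,|J(x,\varphi)|\,dx = K_p^p(\varphi;\Omega)\|\nabla u\|_{L^p(\widetilde\Omega)}^p.
\]
This step is essentially mechanical once the chain rule and area formula for measurable mappings of finite distortion are available.

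The main obstacle is the $L^p$-bound $\|u\circ\varphi\|_{L^p(\Omega)} \le C\|u\|_{W^{1,p}(\widetilde\Omega)}$, since the naive change of variables produces
\[
\int_\Omega |u\circ\varphi|^p\,dx = \int_{\widetilde\Omega} \frac{|u(y)|^p}{|J(\varphi^{-1}(y),\varphi)|}\,dy,
\]
and the factor $1/|J|$ need not be bounded. To treat it I would exploit the hypothesis $1 < p < n$ by invoking the Sobolev embedding $W^{1,p}(\widetilde\Omega)\hookrightarrow L^{p^{\ast}}(\widetilde\Omega)$ with $p^{\ast} = np/(n-p)$, and then applying H\"older with conjugate pair $(p^{\ast}/p,\,n/p)$ to transfer the singular weight into an integrable power $|J|^{-n/p}$. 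A further change of variables rewrites this as $\int_\Omega |J(x,\varphi)|^{(p-n)/p}\,dx$, whose finiteness follows from the distortion bound $|J|\ge |D\varphi|^p/K_p^p$ combined with the $W^{1,p}_{\loc}$-regularity of $\varphi$ on the bounded set $\Omega$.

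The delicate point is precisely this $L^p$-estimate: without a Sobolev embedding there is no way to absorb the non-integrable factor $1/|J|$. Here the three hypotheses of the theorem interact in an essential way: the Luzin $N^{-1}$-property makes $J$ strictly positive a.e.\ and legitimizes the change of variables, the restriction $p<n$ produces the embedding $W^{1,p}\hookrightarrow L^{p^{\ast}}$ on the target side, and the finite distortion controls $|J|$ from below by $|D\varphi|^p$. Summing the two resulting inequalities then yields the required norm estimate with a constant depending on $K_p(\varphi;\Omega)$, on the Sobolev constant of $\widetilde\Omega$, and on the integrability of $|D\varphi|$ on $\Omega$.
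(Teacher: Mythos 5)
Note first that the paper does not give a proof of Theorem \ref{comp_w}; it is imported from \cite{VU02} (see also \cite{V05}), so there is no in-paper argument to compare against. Judged on its own terms, your gradient estimate is the standard one and is correct. The problem is the $L^p$-estimate, in two respects.

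First, the step in which you declare $\int_\Omega |J(x,\varphi)|^{(p-n)/p}\,dx<\infty$ is not justified by what you cite. The distortion bound $|J|\ge|D\varphi|^p/K_p^p$ turns $|J|^{(p-n)/p}$ into $K_p^{\,n-p}\,|D\varphi|^{-(n-p)}$, a \emph{negative} power of $|D\varphi|$, and $W^{1,p}_{\loc}$-regularity only gives $L^p_{\loc}$-control of $|D\varphi|$ from above; it says nothing about $|D\varphi|^{-(n-p)}$. The missing ingredient is Hadamard's inequality $|J(x,\varphi)|\le|D\varphi(x)|^n$, which together with the distortion bound yields $|J|^{p/n}\le|D\varphi|^p\le K_p^p|J|$ and hence (this is exactly where $p<n$ enters) the a.e.\ lower bound
\[
|J(x,\varphi)|\ \ge\ K_p(\varphi;\Omega)^{-\frac{np}{n-p}}.
\]
With this bound your H\"older factor is at most $K_p^{\,n}|\Omega|$, so that route additionally requires $|\Omega|<\infty$.

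Second, once the lower bound on $J$ is in hand, the detour through $L^{p^{\ast}}(\widetilde\Omega)$ and H\"older is superfluous: writing $1\le K_p^{np/(n-p)}|J|$ and applying the area inequality for the homeomorphism gives directly
\[
\int_\Omega |u\circ\varphi|^p\,dx\ \le\ K_p^{\frac{np}{n-p}}\int_\Omega|u\circ\varphi|^p\,|J(x,\varphi)|\,dx\ \le\ K_p^{\frac{np}{n-p}}\,\|u\|^p_{L^p(\widetilde\Omega)},
\]
which, combined with your gradient estimate, proves the theorem with a constant depending only on $K_p(\varphi;\Omega)$, and with no assumption on the domains. This matters beyond elegance: your argument silently assumes $W^{1,p}(\widetilde\Omega)\hookrightarrow L^{p^\ast}(\widetilde\Omega)$, i.e.\ that $\widetilde\Omega$ is a $(p,p^\ast)$-Sobolev embedding domain. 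The theorem carries no such hypothesis, and in the paper's application $\widetilde\Omega=\Omega_\gamma$ is an outward cusp with $\gamma>n$, for which precisely this embedding fails --- the whole point of Theorem \ref{weightemb} is to produce a \emph{weighted} substitute. The $L^p$-bound must therefore be obtained without invoking any Sobolev inequality on $\widetilde\Omega$, and the Jacobian lower bound is the device that makes this possible.
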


\subsection{Poincar\'e-Sobolev inequity.}
Recall that a domain $\Omega \subset\mathbb{R}^n$ is said to be a $(p,q)$-Poincar\'e-Sobolev domain, if the $(p,q)$-Poincar\'e-Sobolev inequality holds:
\begin{equation}\label{poincare}
    \inf_{c \in \mathbb{R}}\|u-c\|_{L^q(\Omega)} \leq B_{p,q}(\Omega)\|\nabla u\|_{L^p(\Omega)}, \,\, u\in W^{1,p}(\Omega)
\end{equation}

In convex domains $\Omega_c\subset\mathbb R^n$, the $(p,q)$-Poincar\'e-Sobolev inequality, $1\leq p <n$, $1\leq q\leq  p^{\ast}=np/(n-p)$,
$$
    \inf_{c \in \mathbb{R}}\|u-c\|_{L^q(\Omega_c)} \leq B_{p,q}(\Omega_c)\|\nabla u\|_{L^p(\Omega_c)}, \,\, u\in W^{1,p}(\Omega_c)
$$
holds \cite{GT} with the constant \cite{GT,GU16}:

\begin{equation}\label{pq_est_c}
    B_{p,q}(\Omega_c) \leq \frac{(\rm{diam}(\Omega_c))^n}{n|\Omega_c|} \Big(\frac{1-\delta}{1/n - \delta}\Big)^{1-\delta} w_n^{1-\frac{1}{n}}|\Omega_c|^{\frac{1}{n}-\delta}, \quad \delta = \frac{1}{p}+\frac{1}{q},
	\end{equation}
where $\omega_n=\frac{2\pi ^{n/2}}{n\Gamma(n/2)}$ is the volume of the unit ball in $\mathbb R^n$.

Here we will use the above estimate for the case of $\Omega_c = \Omega_n$, where $\Omega_n$ is a simplex in $\mathbb{R}^n$:
$$
    \Omega_{n}= \{x = (x_1, x_2, \dots, x_n) \in \mathbb{R}^n: 0 < x_n < 1, 0 < x_i < x_n, i = 1, \dots, n-1\},
$$
Then the above estimate \eqref{pq_est_c} turns into
\begin{equation}\label{pq_est}
    B_{p,q}(\Omega_n) \leq \frac{n^{n/2} \cdot n!}{n} \Big(\frac{1-\delta}{1/n - \delta}\Big)^{1-\delta} w_n^{1-\frac{1}{n}} \left(\frac{1}{n!}\right)^{\frac{1}{n}-\delta}.
\end{equation}

Note also that for the case $p=q$ the sharp lower estimate was obtained in \cite{ENT}:
\begin{equation}\label{eigen_convex}
    \mu_{p,p} (\Omega_n) \geq \Big(\frac{\pi_p}{d(\Omega_n)}\Big)^p, \quad \pi_p = 2\pi\frac{(p-1)^{\frac{1}{p}}}{p\sin(\pi/p)}.
\end{equation}

\section{Weighted Sobolev embeddings}

In this section we consider embedding theorems in bounded outward cuspidal domains 
$$
    \Omega_{\gamma}= \{x = (x_1, x_2, \dots, x_n) \in \mathbb{R}^n: 0 < x_n < 1, 0 < x_i < x_n^\sigma, i = 1, \dots, n-1\},\,\,\sigma\geq 1,
$$
where $\gamma = \sigma(n-1)+1$. For that class of domains we also obtain an estimate for the $(p,q)$-Poincar\'e-Sobolev constant.

\begin{thm}\label{weightemb}
Let $\Omega_{\gamma}\subset\mathbb R^n$ be a bounded outward cuspidal domain. Then the Sobolev space $W^{1,p}(\Omega_{\gamma})$, $1\leq p<n$, $n<\gamma<\infty$, is continuously embedded into the weighted Lebesgue space $L^q(\Omega_{\gamma}, w_{\gamma})$ with the weight function 
$$
w_{\gamma}(x) = \frac{\gamma-p}{n-p}x_n^{\frac{p(\gamma-n)}{n-p}}\,\, \text{for any}\,\, 1\leq q \leq p^{\ast}=np/(n-p).
$$

The embedding is also compact if $1\leq q < p^{\ast}=np/(n-p)$. Moreover, for all $u \in W^{1,p}(\Omega_\gamma)$ the Poincar\'e-Sobolev inequality
\begin{equation}\label{poincareweight}
    \inf_{c \in \mathbb{R}}\|u-c\|_{L^q(\Omega_{\gamma}, w_{\gamma})} \leq B_{p,q}(\Omega_\gamma, w_\gamma)\|\nabla u\|_{L^p(\Omega_{\gamma})}
\end{equation}
holds with the constant 
\begin{multline*}
B_{p,q}(\Omega_\gamma,w_\gamma) \\
\leq B_{p,q}(\Omega_n)\left(\frac{\gamma-p}{n-p}\right)^{\frac{1}{p}}\sqrt{(n-1) \left(\frac{(n-p)(\gamma-1)}{(\gamma-p)(n-1)} -1\right)^2+n-1+\left(\frac{n-p}{\gamma-p}\right)^2},
\end{multline*}
where $B_{p,q}(\Omega_n)$ is a constant (\ref{pq_est}) in the Poincar\'e-Sobolev inequality for the convex domain $\Omega_n$.
\end{thm}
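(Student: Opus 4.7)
The plan is to reduce the weighted inequality on the outward cusp $\Omega_\gamma$ to the classical Poincar\'e-Sobolev inequality on the convex reference domain $\Omega_n$ by pulling back along a carefully tuned homeomorphism. I would consider the map $\varphi : \Omega_n \to \Omega_\gamma$ given by
$$
\varphi(x_1,\dots,x_{n-1},x_n) := \bigl(x_1 x_n^{a}, \dots, x_{n-1} x_n^{a}, x_n^{b}\bigr), \qquad b := \frac{n-p}{\gamma-p}, \quad a := b\sigma - 1 = \frac{(n-p)(\gamma-1)}{(\gamma-p)(n-1)} - 1,
$$
where the last equality uses $\sigma = (\gamma-1)/(n-1)$. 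A direct check shows that $\varphi$ is a smooth diffeomorphism of $\Omega_n$ onto $\Omega_\gamma$ (the condition $x_i < x_n$ translates to $y_i < y_n^\sigma$), it has the Luzin $N^{-1}$-property, and its Jacobian is $J(x,\varphi) = b\, x_n^{b\gamma - n}$, which is strictly positive on $\Omega_n$.

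The first technical step is to verify that $\varphi$ is weak $p$-quasiconformal. A computation of the triangular matrix $D\varphi$ gives
$$
|D\varphi(x)|^2 = (n-1)\, x_n^{2a} + a^2 x_n^{2a-2}|x'|^2 + b^2 x_n^{2b-2}, \qquad x' := (x_1,\dots,x_{n-1}).
$$
Using $|x'|^2 \leq (n-1) x_n^2$ together with $a - (b-1) = b(\sigma-1) \geq 0$ (which holds by the hypotheses $p<n$ and $\sigma\geq 1$) and $x_n \in (0,1)$, one obtains $|D\varphi(x)|^2 \leq M^2 x_n^{2(b-1)}$, where
$$
M^2 := (n-1)(1+a^2) + b^2 = \sum_{i=1}^{n-1}\left(\frac{(n-p)(\gamma-1)}{(\gamma-p)(n-1)}-1\right)^2 + (n-1) + \left(\frac{n-p}{\gamma-p}\right)^2.
$$
A direct algebraic identity shows $p(b-1) = b\gamma - n$ (equivalently, $b(\gamma-p) = n-p$), hence $|D\varphi|^p / J \leq M^p/b$ uniformly on $\Omega_n$, which gives $K_p(\varphi) \leq M((\gamma-p)/(n-p))^{1/p}$. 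Theorem \ref{comp_w} then supplies a bounded composition operator $\varphi^{\ast}: W^{1,p}(\Omega_\gamma) \to W^{1,p}(\Omega_n)$, and in particular the gradient estimate $\|\nabla(u\circ\varphi)\|_{L^p(\Omega_n)} \leq K_p(\varphi) \|\nabla u\|_{L^p(\Omega_\gamma)}$.

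The second key step is the change-of-variables identity $\|u - c\|_{L^q(\Omega_\gamma,w_\gamma)} = \|v - c\|_{L^q(\Omega_n)}$ for $v := u\circ\varphi$. Substituting $y = \varphi(x)$ in the weighted integral, the product $w_\gamma(\varphi(x))\,J(x,\varphi)$ becomes $\frac{\gamma-p}{n-p}\cdot b \cdot x_n^{bp(\gamma-n)/(n-p) + b\gamma - n}$, and the choice $b = (n-p)/(\gamma-p)$ was arranged precisely so that the prefactor equals $1$ and the exponent of $x_n$ vanishes. Applying the Poincar\'e-Sobolev inequality on $\Omega_n$ with constant $B_{p,q}(\Omega_n)$ from \eqref{pq_est} to $v$ and chaining with the gradient estimate yields \eqref{poincareweight} with the announced constant. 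Compactness for $q < p^{\ast}$ follows by pushing a bounded sequence in $W^{1,p}(\Omega_\gamma)$ forward under $\varphi^\ast$ to a bounded sequence in $W^{1,p}(\Omega_n)$, extracting a strongly $L^q(\Omega_n)$-convergent subsequence via Rellich-Kondrachov on the bounded Lipschitz convex domain $\Omega_n$, and transporting this back to strong convergence in $L^q(\Omega_\gamma,w_\gamma)$ by the above isometry.

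The main technical point is identifying the right exponents: $b = (n-p)/(\gamma-p)$ is forced by requiring the change of variables to annihilate the weight $w_\gamma$, while $a = b\sigma - 1$ is then dictated by $\varphi(\Omega_n) = \Omega_\gamma$. Once these are fixed, the quasiconformal distortion $|D\varphi|^p/J$ is uniformly bounded precisely because of the identity $p(b-1) = b\gamma - n$, which is itself equivalent to the definition of $b$; everything else reduces to careful but direct book-keeping with the exponents.
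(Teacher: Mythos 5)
Your proof is correct and follows essentially the same approach as the paper: pull back through the homeomorphism $\varphi_a:\Omega_n\to\Omega_\gamma$, verify weak $p$-quasiconformality via the identity $p(b-1)=b\gamma-n$, apply Theorem~\ref{comp_w} to obtain the bounded composition operator on $W^{1,p}$, and use the change-of-variables identity $w_\gamma(\varphi(x))\,J(x,\varphi)\equiv 1$ to transport the Poincar\'e--Sobolev inequality from $\Omega_n$. The only difference is presentational: the paper first treats a one-parameter family $\varphi_a$, obtains embeddings into a nested family of weighted spaces $L^q(\Omega_\gamma,w_{\gamma,a})$, and then argues that $a=(n-p)/(\gamma-p)$ is optimal; you go directly to that choice of $a$, which is legitimate and slightly more economical since the theorem statement only concerns the optimal weight $w_\gamma$. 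Your explicit algebraic bookkeeping (the reduction $x_n^{2a}\le x_n^{2(b-1)}$ via $a-(b-1)=b(\sigma-1)\ge0$, and the cancellation $bp(\gamma-n)/(n-p)+b\gamma-n=0$) matches the paper's computations exactly, and your treatment of compactness via Rellich--Kondrachov on the simplex and the $L^q$-isometry is the same as the paper's.
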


\begin{proof}
    To prove this weighted embedding theorem, we push the weighted embedding problem from the singular domain $\Omega_\gamma$ to the Lipschitz domain $\Omega_n$ with the help of a family of weak $p$-quasiconformal mappings $\varphi_a: \Omega_n \to \Omega_\gamma$, $a>0$. With this mappings we obtain embeddings to the family of weighted spaces $L^q(\Omega_\gamma, w_{\gamma,a})$ ordered by inclusion. Choosing the optimal target space for the embedding, we finish the proof. This method can be illustrated by the following anticommutative diagram \cite{GGu,GU}:
    \begin{equation}\label{Diag}
        \begin{CD}
            W^{1,p}(\Omega_\gamma) @>{\varphi_a^\ast}>> W^{1,p}(\Omega_n) \\
            @VVV @VVV \\
            L^q(\Omega_\gamma, w_{\gamma,a}) @<{(\varphi_a^{-1})^\ast}<< L^q(\Omega_n)
        \end{CD}
    \end{equation}

In this diagram, $\varphi_a^\ast$ and $(\varphi_a^{-1})^\ast$ are two composition operators, defined by the rules $\varphi_a^\ast(f) = f \circ \varphi_a$ and $(\varphi_a^{-1})^\ast(g) = g \circ \varphi_a^{-1}$, respectively.
    
Let us define a homeomorphism $\varphi_a: \Omega_n \to \Omega_\gamma$, $a>0$, \cite{GGu,GU} in the following way:
    $$
        \varphi_a(y) = (y_1 y_n^{a\sigma - 1}, \dots, y_{n-1}y_n^{a\sigma - 1}, y_n^a), \,\,\sigma=\frac{\gamma-1}{n-1}.
    $$
For this mapping the Jacobi matrix takes the form:

    \begin{multline*}
        D\varphi_a(y) =
        \begin{pmatrix}
            y_n^{a\sigma - 1} & 0 & \dots & (a\sigma-1) y_1 y_n^{a\sigma - 2} \\
            0 & y_n^{a\sigma - 1} & \dots & (a\sigma-1) y_2 y_n^{a\sigma - 2} \\
            \dots & \dots & \dots & \dots \\
            0 & 0 & \dots & ay_n^{a-1}
        \end{pmatrix} \\
        = y_n^{a-1}
        \begin{pmatrix}
            y_n^{a\sigma - a} & 0 & \dots & (a\sigma-1) \frac{y_1}{y_n} y_n^{a\sigma - a} \\
            0 & y_n^{a\sigma - a} & \dots & (a\sigma-1) \frac{y_2}{y_n} y_n^{a\sigma - a} \\
            \dots & \dots & \dots & \dots \\
            0 & 0 & \dots & a
        \end{pmatrix}
    \end{multline*}
    Since $0 < y_n < 1$ and $0 < \frac{y_1}{y_n} < 1$ for all $y \in \Omega_n$, we have
    $$
        |D\varphi_a(y)| \leq y_n^{a-1} \sqrt{(n-1) (a\sigma -1)^2+n-1+a^2}.
    $$
        The Jacobian of this mapping is 
    $$
        J(y,\varphi_a) = a y_n^{(a\sigma-1)(n-1)+ a - 1} = a y_n^{a\gamma - n}.
    $$ 
		Hence 
\begin{multline*}
K_{p}(\varphi_a;\Omega_n)=\ess\sup\limits_{\Omega_n}\left(\frac{|D\varphi_a(y)|^p}{|J(y,\varphi_a)|}\right)^{\frac{1}{p}}\leq 
\frac{y_n^{a-1} \sqrt{(n-1) (a\sigma -1)^2+n-1+a^2}}{a^{\frac{1}{p}} y_n^{\frac{a\gamma - n}{p}}}\\
=y_n^{a-1-\frac{a\gamma - n}{p}}\cdot
\left(\frac{1}{a}\right)^{\frac{1}{p}}\sqrt{(n-1) (a\sigma -1)^2+n-1+a^2} < \infty,
\end{multline*}
if 
$$
        a-1-\frac{a\gamma-n}{p} \geq 0 \implies a \leq \frac{n-p}{\gamma-p}.
$$

Therefore, by Theorem~\ref{comp_w} the homeomorphism  $\varphi_a$,  $a \in (0,\frac{n-p}{\gamma -p}]$, generates a bounded composition operator 
    $$
       \varphi_a^\ast: W^{1,p}(\Omega_\gamma) \to W^{1,p}(\Omega_n).
    $$
		
Moreover, the inequality
\begin{equation}
\label{est_comp}
\|\nabla(\varphi_a^\ast(u))\|_{L^p(\Omega_n)}\leq K_{p}(\varphi_a;\Omega_n) \|\nabla u\|_{L^p(\Omega_{\gamma})}
\end{equation}
holds for any function $u\in L^p(\Omega_{\gamma})$.

 The mapping $\varphi_a: \Omega_n \to \Omega_\gamma$ is a local bi-Lipschitz homeomorphism and the inverse mapping $\varphi_a^{-1}: \Omega_{\gamma} \to \Omega$ has the form
    $$
        \varphi_a^{-1}(x) = (x_1 x_n^{\frac{1}{a}-\sigma}, \dots, x_{n-1} x_n^{\frac{1}{a}-\sigma}, x_n^{\frac{1}{a}}), \,\,\sigma=\frac{\gamma-1}{n-1},
    $$
    with the Jacobian
    $$
        J(x,\varphi_a^{-1}) = \frac{1}{a}x_n^{\frac{n}{a}-\gamma} = w_{\gamma,a}(x).
    $$
 
The function $w_{\gamma,a}(x)$ is a non-negative and $L^1$-integrable function. Indeed,
    $$
        \int_{\Omega_\gamma} w_{\gamma,a}(x) \, dx = \frac{1}{a} \int_0^1  x_n^{\frac{n}{a} - \gamma} \cdot x_n^{\gamma - 1} \, dx = \frac{1}{a} \int_0^1 x_n^{\frac{n}{a} - 1} \, dx.
    $$
    Since $a > 0$ the above integral is finite.

By the change of variables formula \cite{Fe}   
\begin{multline*}
\left(\int_{\Omega_{\gamma}}|\left(\varphi^{-1}\right)^*(v)(x)|^q w_{\gamma,a}(x) \, dx\right)^{\frac{1}{q}}\\
=\left(\int_{\Omega_{\gamma}}|v\circ\varphi^{-1}(x)|^q J(x,\varphi_a^{-1}) \, dx\right)^{\frac{1}{q}}=\left(\int_{\Omega_n} |v(y)|^q~dy\right)^{\frac{1}{q}}
\end{multline*}
for any function $v\in L^q(\Omega_n)$

    Hence, the second composition operator in the diagram \eqref{Diag}
    $$
        (\varphi_a^{-1})^\ast: L^q(\Omega) \to L^q(\Omega_\gamma, w_{\gamma,a})
    $$
    is bounded.
		
Since $\Omega_n$ is an $n$-dimensional simplex, the embedding operator
$$
i_{\Omega_n}: W^{1,p}(\Omega_n)\hookrightarrow L^q(\Omega_n)
$$
is bounded for $1\leq q\leq np/(n-p)$ and compact for  $1\leq q< np/(n-p)$. Hence the embedding operator
$$
i_{\Omega_{\gamma}}: W^{1,p}(\Omega_{\gamma})\hookrightarrow L^q(\Omega_{\gamma}, w_{\gamma,a})
$$
is bounded for $1\leq q\leq np/(n-p)$, as a superposition of bounded composition operators and the bounded embedding operator, and is compact for  $1\leq q< np/(n-p)$, as a superposition of bounded composition operators and the compact embedding operator.

We obtained the embedding of $W^{1,p}(\Omega_{\gamma})$ into the family of spaces $L^q(\Omega_\gamma, w_{\gamma,a})$
with weights
$$
w_{\gamma,a}= \frac{1}{a}x_n^{\frac{n}{a}-\gamma}.
$$
The condition  $a \leq {(n-p)}/{(\gamma-p)}$ implies that ${n}/{a}-\gamma$>0. Hence, for some $a_1<a_2$,
$$
\frac{n}{a_2}-\gamma<\frac{n}{a_1}-\gamma.
$$
So, we have that
$$
w_{\gamma,a_2}= \frac{1}{a_2}x_n^{\frac{n}{a_2}-\gamma}< \frac{1}{a_2}x_n^{\frac{n}{a_1}-\gamma}=\frac{a_1}{a_2}\frac{1}{a_1}x_n^{\frac{n}{a_1}-\gamma}=\frac{a_1}{a_2}w_{\gamma,a_1}.
$$
It implies $L^q(\Omega_\gamma, w_{\gamma,a_1})\subset L^q(\Omega_\gamma, w_{\gamma,a_2})$ if $a_1<a_2$, see for example \cite{RS21}.

This means that the sharp target space among $L^q(\Omega_\gamma, w_{\gamma,a})$ is a space with maximal $a={(n-p)}/{(\gamma-p)}$, i.e. 
$$
i_{\Omega_{\gamma}}: W^{1,p}(\Omega_{\gamma})\hookrightarrow L^q(\Omega_{\gamma}, w_{\gamma})
$$
with the weight $w_{\gamma}(x)= \frac{\gamma - p}{n - p} x_n^{\frac{p(\gamma - n)}{n-p}}$.

Finally we establish the estimate for the Poincar\'e-Sobolev constant. For any $u \in W^{1,p}(\Omega_\gamma)$
\begin{multline*}
    \inf_{c \in \mathbb{R}}\|u-c\|_{L^q(\Omega_{\gamma}, w_{\gamma})} = \inf_{c \in \mathbb{R}}\|u\circ\varphi-c\|_{L^q(\Omega_{n})} \\
    \leq B_{p,q}(\Omega_n) \|\nabla (u \circ \varphi)\|_{L^p(\Omega_n)} \leq B_{p,q}(\Omega_n) K_{p}(\varphi_{\frac{n-p}{\gamma-p}};\Omega_n) \|\nabla u\|_{L^p(\Omega_\gamma)}
\end{multline*}
and the claim of the theorem follows.
\end{proof}    

\section{Weighted Neumann eigenvalues}

Given a bounded domain $\Omega \subset \mathbb{R}^n$, we consider the weighted Neumann $(p,q)$-eigenvalue problem for the non-linear $p$-Laplace operator, $1<p<n$, $1<q<\frac{np}{n-p}$:
\begin{equation}\label{pwLaplace}
\begin{cases}
    -\dv(|\nabla u|^{p-2} u) = \mu_{p,q} \|u\|^{p-q}_{L^q(\Omega,w)}|u|^{q-2}u w \,\,\, &\text{ in } \Omega, \\
    \frac{\partial u}{\partial \nu} = 0 &\text{ on } \Omega,
\end{cases}
\end{equation}
where $\nu$ is an outward unit normal to the boundary of the domain $\Omega$, and $w$ is a $L^1_{loc}$-integrable weight function. If $\Omega$ has a non-smooth boundary, we consider this Neumann eigenvalue problem in the weak formulation, i.e. $(\mu_{p,q}, u) \in \mathbb{R} \times W^{1,p}(\Omega)$ is an eigenpair of \eqref{pwLaplace} if for every $v \in W^{1,p}(\Omega)$ it is an eigenpair of the weak problem:
\begin{equation}
\label{weak}
\int_{\Omega} |\nabla u|^{p-2} \nabla u \cdot \nabla v \, dx = \mu_{p,q} \|u\|^{p-q}_{L^q(\Omega, w)} \int_{\Omega} |u|^{q-2} u v w \, dx. 
\end{equation}

\subsection{Variational characterization and Min-Max Principle}

On the first step we prove the variational characterization of the first non-trivial Neumann $(p,q)$-eigenvalue of \eqref{weak}.
Define the functional $F: W^{1,p}(\Omega) \to \mathbb{R}$ as
$$
    F(u) = \int_\Omega |\nabla u|^p \, dx 
$$
This functional $F \in C^1(W^{1,p}(\Omega), \mathbb{R})$, with
$$
    (F'(u), v) = p \int_\Omega |\nabla u|^{p-2} \nabla u \cdot \nabla v \, dx.
$$
Moreover, $F$ is a weakly lower semicontinuous functional.

With the assumption that the embedding operator $W^{1,p}(\Omega) \hookrightarrow L^q(\Omega, w)$ is compact, $1< p \leq q$, we define the functional $G: W^{1,p}(\Omega) \to \mathbb{R}$ as
$$
G(u) = \int_\Omega |u|^q w \, dx.
$$
This functional $G \in C^1(W^{1,p}(\Omega), \mathbb{R})$, with
$$
\quad (G'(u),v) = q\int_\Omega |u|^{q-2} u v w \,dx
$$

The equation (\ref{weak}) is the Euler-Lagrange equation for the functional $F$, restricted on the set $S = \{u \in W^{1,p}(\Omega): G(u) = 1\}$. Hence  the critical point $u$ of the functional $F$ are eigenfunctions of \eqref{weak}, and the associated Lagrange multipliers $\mu$ correspond to eigenvalues of \eqref{weak}.

Let $\mu_{p,q} > 0$, then, substituting $v=1$ in \ref{weak}, we see that $u \in V_{p,q}(\Omega, w)$, where

$$
V_{p,q}(\Omega, w) := \left\{ u \in W^{1,p}(\Omega) \setminus \{0\} : \int_\Omega |u|^{q-2} u w \, dx = 0 \right\}.
$$

$$
    V_{p,q}(\Omega, w) := \{W^{1,p}(\Omega)\setminus\{0\}: \int_\Omega |u|^{q-2}uvw \, dx = 0\}.
$$ 

Let us prove the following lemmas, which can be of independent interest.

\begin{lem}\label{infimum}
    Let a bounded domain $\Omega \subset \mathbb{R}^n$ and a weight function $w\in L^1_{loc}(\Omega)$ be such that the embedding operator $W^{1,p}(\Omega) \hookrightarrow L^q(\Omega, w)$, $1< p \leq q$, is compact. Then for all $u \in W^{1,p}(\Omega)$ there exists a unique $\widetilde{u} \in \mathbb{R}$, such that
    $$
        \int_{\Omega} |u-\widetilde{u}|^{q-2}(u-\widetilde{u})w\,dx = 0
    $$

    Moreover, for $u \in W^{1,p}(\Omega)$, $\widetilde{u}$ can be characterize as
    $$
        \|u-\widetilde{u}\|_{L^q(\Omega,w)} = \inf_{c \in \mathbb{R}}\|u-c\|_{L^q(\Omega,w)}.
    $$
\end{lem}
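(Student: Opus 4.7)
The plan is to study the function $\phi \colon \mathbb{R} \to \mathbb{R}$ defined by
\[
\phi(c) := \int_{\Omega} |u-c|^{q-2}(u-c)\,w\,dx,
\]
and show that it has a unique zero $\widetilde{u}$, which then turns out to coincide with the minimizer of the $L^q(\Omega,w)$-distance to the real line.

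First I would check that $\phi$ is well-defined and continuous on $\mathbb{R}$. The compact embedding $W^{1,p}(\Omega)\hookrightarrow L^q(\Omega,w)$ in particular gives $u\in L^q(\Omega,w)$, so $|u-c|^{q-1}\in L^{q/(q-1)}(\Omega,w)$; combined with $w\in L^1_{loc}(\Omega)$ and H\"older/dominated convergence, $\phi$ is finite and continuous. Next, since $q>1$, the real function $t\mapsto |t|^{q-2}t$ is strictly increasing, so $c\mapsto |u(x)-c|^{q-2}(u(x)-c)$ is strictly decreasing for every $x$ at which $w(x)>0$. Integrating (and noting that $\int_\Omega w\,dx>0$, for otherwise the target space $L^q(\Omega,w)$ would be trivial and the compact embedding assumption vacuous) yields that $\phi$ is strictly decreasing. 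Finally, to get opposite signs, I would use monotone convergence to see that $\phi(c)\to -\infty$ as $c\to +\infty$ and $\phi(c)\to +\infty$ as $c\to -\infty$, because $|u-c|^{q-2}(u-c)\cdot w$ is dominated in absolute value by $(|u|+|c|)^{q-1}w$ while tending pointwise to $\mp\infty$ on the set of positive $w$-measure. The intermediate value theorem together with strict monotonicity then produces a unique $\widetilde{u}\in\mathbb{R}$ with $\phi(\widetilde{u})=0$.

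For the minimization characterization, I would study
\[
\psi(c) := \int_{\Omega}|u-c|^{q}\,w\,dx,
\]
which is finite since $u\in L^q(\Omega,w)$ and $w\in L^1_{loc}(\Omega)$. Because $q>1$, the map $t\mapsto |t|^q$ is $C^1$ and strictly convex, hence $\psi$ is $C^1$ and strictly convex on $\mathbb{R}$, with derivative
\[
\psi'(c) = -q\int_{\Omega}|u-c|^{q-2}(u-c)\,w\,dx = -q\,\phi(c),
\]
the differentiation under the integral being justified by the same $L^{q/(q-1)}$ bound used above. Strict convexity implies that $\psi$ admits a unique minimizer, which is the unique critical point; by the first step this critical point is exactly $\widetilde{u}$, giving $\|u-\widetilde{u}\|_{L^q(\Omega,w)}=\inf_{c\in\mathbb{R}}\|u-c\|_{L^q(\Omega,w)}$.

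I expect the main technical obstacle to be the justification of the limits $\phi(c)\to\pm\infty$ and the differentiation under the integral in $\psi'$, since the weight $w$ is only locally integrable and one has to be careful not to invoke bounds that fail without integrability of $w$ on all of $\Omega$; this is handled by reducing the relevant quantities to $L^{q/(q-1)}(\Omega,w)$-bounds on $|u-c|^{q-1}$ together with the fact that $|u|^{q-1}w\in L^{q/(q-1)}(\Omega,w)$ follows from $u\in L^q(\Omega,w)$. Everything else is routine convexity and continuity bookkeeping.
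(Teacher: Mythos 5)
Your proof is correct and follows essentially the same route as the paper: the unique zero $\widetilde{u}$ is found from the strict monotonicity and sign change of $c\mapsto\int_\Omega|u-c|^{q-2}(u-c)\,w\,dx$ (which the paper writes as the difference $f-g$ of two monotone pieces built from $(u-t)^{\pm}$), and the minimization characterization then follows by differentiating $c\mapsto\int_\Omega|u-c|^{q}\,w\,dx$, exactly as the paper does with its function $h$. Your closing worry about $w$ being only locally integrable is moot, since on a bounded domain constants belong to $W^{1,p}(\Omega)$ and the (compact, hence bounded) embedding into $L^q(\Omega,w)$ forces $\int_\Omega w\,dx<\infty$; also the phrase ``$|u|^{q-1}w\in L^{q/(q-1)}(\Omega,w)$'' should read $|u|^{q-1}\in L^{q/(q-1)}(\Omega,w)$.
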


\begin{proof}
    By the assumption of the lemma, $u \in W^{1,p}(\Omega)$ belongs to $L^q(\Omega,w)$. For $u:\Omega \to \mathbb{R}$ define 
    $$
        u^+(x) = \max\{u(x),0\} \quad \text{and} \quad u^-(x)=\max\{-u(x),0\}.
    $$
    Consider two functions of $t \in \mathbb{R}$:
    $$
        f(t) = \int_{\Omega} |(u(x) - t)^+|^{q-1}w(x)\, dx \quad \text{and} \quad g(t) = \int_{\Omega} |(u(x) - t)^-|^{q-1}w(x)\, dx.
    $$

    Obviously, these functions are continuous, $f$ is nonincreasing, $g$ is nondecreasing, $(f-g)$ is stricly decreasing and $(f-g) \to +\infty$ as $t \to -\infty$, $(f-g) \to -\infty$ as $t \to +\infty$. Therefore, there exists a unique $t = \widetilde{u}$ such that $(f(\widetilde{u})-g(\widetilde{u}))=0$, which implies the first statement of the lemma.

    Now, for $v\in V_{p,q}(\Omega,w)$, we consider a function
    $$
        h(t) = \int_\Omega|v(x)-t|^q w(x)\, dx\quad \text{with} \quad h'(t) = -q\int_\Omega |v(x)-t|^{q-2}(v(x)-t)w(x)\,dx
    $$
    Then it can be easily seen that for $v \in V_{p,q}(\Omega,w)$, $h'(0) = 0$, $h'(t)>0$ for $t>0$ and $h'(t)<0$ for $t<0$. Therefore, function $h(t)$ attains its minimum on $t=0$ and the second statement of the lemma follows.
\end{proof}

\begin{lem}\label{bound}
    Let a bounded domain $\Omega \subset \mathbb{R}^n$ and a weight function $w\in L^1_{loc}(\Omega)$ be such that the embedding operator $W^{1,p}(\Omega) \hookrightarrow L^q(\Omega, w)$, $1< p \leq q$, is compact. Then for all $u \in V_{p,q}(\Omega, w)$ there exists a constant $C = C(\Omega)>0$ such that 
    $$
        \|u\|_{L^q(\Omega, w)} \leq C\|\nabla u\|_{L^p(\Omega)}.
    $$
\end{lem}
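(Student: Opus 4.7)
The plan is to reduce the statement to a weighted Poincar\'e--Sobolev inequality via Lemma \ref{infimum}, and then prove that inequality by a compactness-and-contradiction argument exploiting the compact embedding hypothesis.

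For $u \in V_{p,q}(\Omega, w)$, Lemma \ref{infimum} yields $\|u\|_{L^q(\Omega, w)} = \inf_{c \in \mathbb{R}} \|u - c\|_{L^q(\Omega, w)}$. Hence it suffices to produce a constant $C > 0$ such that
$$
\inf_{c \in \mathbb{R}} \|u - c\|_{L^q(\Omega, w)} \leq C \|\nabla u\|_{L^p(\Omega)} \quad \text{for every } u \in W^{1,p}(\Omega).
$$

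I would argue by contradiction. If no such constant exists, pick a sequence $u_n \in W^{1,p}(\Omega)$ with $\inf_c \|u_n - c\|_{L^q(\Omega, w)} = 1$ and $\|\nabla u_n\|_{L^p} \to 0$, and replace each $u_n$ by $v_n := u_n - \widetilde{u_n}$ with $\widetilde{u_n}$ supplied by Lemma \ref{infimum}. Then $v_n \in V_{p,q}(\Omega, w)$, $\|v_n\|_{L^q(\Omega, w)} = 1$, and $\|\nabla v_n\|_{L^p} \to 0$. Once $\{v_n\}$ is seen to be bounded in $W^{1,p}(\Omega)$, the compact embedding hypothesis furnishes a subsequence converging strongly in $L^q(\Omega, w)$ to some $v$ and weakly in $W^{1,p}(\Omega)$. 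By lower semicontinuity of the gradient norm, $\nabla v = 0$, so $v$ is constant on each component of $\Omega$. The nonlinear map $u \mapsto \int_\Omega |u|^{q-2} u \, w \, dx$ is continuous on $L^q(\Omega, w)$, so $V_{p,q}$ is closed under such convergence and $v \in V_{p,q}(\Omega, w)$; since the only constant satisfying $\int_\Omega |c|^{q-2} c \, w\, dx = 0$ is $c = 0$ (assuming $\int_\Omega w > 0$, otherwise the embedding is trivial), I obtain $v = 0$, contradicting $\|v\|_{L^q(\Omega, w)} = 1$.

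The main obstacle is the $L^p$-boundedness of $\{v_n\}$, which is not directly controlled by the compactness hypothesis alone. I would close this gap by invoking the classical Lebesgue-mean Poincar\'e inequality on $\Omega$ (which holds on the outward cuspidal domains $\Omega_\gamma$ of interest): writing $\bar{v_n} = |\Omega|^{-1} \int_\Omega v_n \, dx$, one gets $\|v_n - \bar{v_n}\|_{L^p(\Omega)} \leq C_1 \|\nabla v_n\|_{L^p} \to 0$, so $v_n - \bar{v_n} \to 0$ in $W^{1,p}$, and by the continuous embedding $W^{1,p}(\Omega) \hookrightarrow L^q(\Omega, w)$ also in $L^q(\Omega, w)$. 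Combined with $\|v_n\|_{L^q(\Omega, w)} = 1$, this forces $|\bar{v_n}| \, \|1\|_{L^q(\Omega, w)} \to 1$, so the constants $\bar{v_n}$ are bounded and hence $\|v_n\|_{L^p(\Omega)}$ is bounded, completing the compactness argument and thus the proof.
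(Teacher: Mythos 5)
Your proof follows the same high-level contradiction-and-compactness route as the paper's, but you have done something the paper does not: you noticed and closed a genuine gap. The paper normalizes $\|u_n\|_{L^q(\Omega,w)}=1$, observes $\|\nabla u_n\|_{L^p(\Omega)}\to 0$, and then simply asserts that ``our sequence $\{u_n\}$ is uniformly bounded in $W^{1,p}(\Omega)$''. This does not follow: the weighted $L^q(\Omega,w)$-norm, with a weight that may degenerate, gives no direct control on the unweighted $L^p(\Omega)$-norm, and $\|\nabla u_n\|_{L^p}\to 0$ alone says nothing about $\|u_n\|_{L^p}$. You have correctly identified this as the crux and patched it by invoking a classical (unweighted) $(p,p)$-Poincar\'e inequality for $\Omega$ and the continuity of $W^{1,p}(\Omega)\hookrightarrow L^q(\Omega,w)$: the mean $\bar v_n$ satisfies $\|v_n-\bar v_n\|_{W^{1,p}}\to 0$, hence $\|v_n-\bar v_n\|_{L^q(\Omega,w)}\to 0$, and with $\|v_n\|_{L^q(\Omega,w)}=1$ this forces $|\bar v_n|\,\|1\|_{L^q(\Omega,w)}\to 1$; provided $0<\int_\Omega w<\infty$ (needed anyway for $V_{p,q}$ to make sense) the means are bounded and hence so is $\|v_n\|_{L^p(\Omega)}$. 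The rest of your argument (weak lower semicontinuity to get $\nabla v=0$, weak closedness of $V_{p,q}$ to force $v=0$, contradicting $\|v\|_{L^q(\Omega,w)}=1$) matches the paper and is correct.

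One caveat you should be explicit about: the patch introduces a hypothesis beyond those stated in the lemma, namely that $\Omega$ supports the unweighted $(p,p)$-Poincar\'e inequality. The lemma as written only assumes compactness of the weighted embedding, which does not by itself yield this. Your observation therefore points to the lemma's hypotheses being incomplete as stated; your fix is sound for the outward cuspidal domains $\Omega_\gamma$ on which the lemma is actually applied (where the unweighted Poincar\'e inequality can be obtained, for instance, from the same composition-operator machinery with $q=p$), but for a genuinely arbitrary bounded $\Omega$ and $w\in L^1_{\mathrm{loc}}$ the statement is not justified by the proof in the paper either.
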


\begin{proof}
    Let assume for contrary that for every $n \in \mathbb{N}$ there exists $u_n \in V_{p,q}(\Omega, w)$ and 
    \begin{equation}\label{contr}
        \|u_n\|_{L^q(\Omega, w)} \geq n\|\nabla u_n\|_{L^p(\Omega)}.
    \end{equation}

    As before, without loss of generality we can assume that $\|u_n\|_{L^q(\Omega,w)} = 1$ (if not, we define $v_n = u_n/\|u_n\|_{L^q(\Omega,w)}$). Inequality \eqref{contr} implies that $\|\nabla u_n\|_{L^p(\Omega)} \to 0$ as $n \to \infty$. Therefore, our sequence $\{u_n\}_{n\in\mathbb{N}}$ is uniformly bounded in $W^{1,p}(\Omega)$. 
		
By Banach-Alaoglu theorem, there exists a function $u \in W^{1,p}(\Omega)$ such that
    $$
        u_n \rightharpoonup u \text{ weakly in } W^{1,p}(\Omega) \text{ and } \nabla u_n \rightharpoonup \nabla u \text{ weakly in } L^p(\Omega)
    $$
    Moreover, by the assumption of the theorem, the embedding operator
    $$
        i: W^{1,p}(\Omega) \to L^q(\Omega,w)
    $$
    is compact, which implies the existence of a subsequence (still denoted by $u_n$) such that
    $$
        u_n \to u \text{ strongly in } L^q(\Omega, w),
    $$
    and there exists a function $g \in L^q(\Omega, w)$ such that
    $$
        |u_n| \leq g \text{ a.e. in } \Omega.
    $$

    Since $\|\nabla u_n\|_{L^p(\Omega)} \to 0$ for $n \to \infty$, we also have $\nabla u_n \rightharpoonup 0$ weakly in $L^p(\Omega)$ and, since $L^p(\Omega)$ is a reflexive space, by \cite{Br}, $\nabla u = 0$ a.e. in $\Omega$. It implies $u = const$ a.e. in $\Omega$. At the same time, we have
    $$
        0 = \lim\limits_{n \to \infty} \int_{\Omega} |u_n|^{q-2} u_n w dx = \int_{\Omega}|u|^{q-2}uwdx,
    $$
    which implies $u = 0$ a.e. in $\Omega$. This contradicts with the hypothesis that $\|u\|_{L^q(\Omega,w)} = 1$ and the proof is follows.
\end{proof}

\begin{thm}\label{closed}
     Let a bounded domain $\Omega \subset \mathbb{R}^n$ and a weight function $w\in L^1_{loc}(\Omega)$ be such that the embedding operator $W^{1,p}(\Omega) \hookrightarrow L^q(\Omega, w)$, $1< p \leq q$, is compact. Then the set $V_{p,q}(\Omega, w)$ is weakly closed in $W^{1,p}(\Omega)$, i.e. for $u_n \in V_{p,q}(\Omega, w)$
    $$
        u_n \rightharpoonup u \text{ weakly in } W^{1,p}(\Omega) \implies u \in V_{p,q}(\Omega, w).
    $$
\end{thm}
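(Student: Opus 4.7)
The plan is to take an arbitrary sequence $u_n \in V_{p,q}(\Omega,w)$ with $u_n \rightharpoonup u$ weakly in $W^{1,p}(\Omega)$ and show that the limit satisfies the defining constraint $\int_\Omega |u|^{q-2} u\, w\, dx = 0$. The main tool is the compactness of the embedding $W^{1,p}(\Omega) \hookrightarrow L^q(\Omega,w)$ assumed in the hypothesis, which upgrades weak $W^{1,p}$ convergence to strong $L^q(\Omega,w)$ convergence. From this strong convergence I would extract a subsequence (still denoted $u_n$) along which $u_n \to u$ pointwise a.e.\ in $\Omega$, with a common majorant $g \in L^q(\Omega,w)$ satisfying $|u_n| \leq g$ a.e.; this is the standard almost-everywhere representation of $L^q$ convergence already invoked in the proof of Lemma \ref{bound}.

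The key analytical step is to show that the Nemytskii-type functional $T(v) = \int_\Omega |v|^{q-2} v\, w\, dx$ is continuous on $L^q(\Omega, w)$. Since $t \mapsto |t|^{q-2} t$ is continuous on $\mathbb{R}$ for $q > 1$, pointwise convergence yields $|u_n|^{q-2} u_n \to |u|^{q-2} u$ a.e. For the dominating function I would use $\bigl||u_n|^{q-2} u_n\bigr|\, w = |u_n|^{q-1} w \leq g^{q-1} w$, together with the observation that $g^{q-1} w \in L^1(\Omega)$ by H\"older's inequality,
\[
\int_\Omega g^{q-1} w\, dx \leq \|g\|_{L^q(\Omega, w)}^{q-1}\, \|w\|_{L^1(\Omega)}^{1/q},
\]
where $w \in L^1(\Omega)$ is automatic from the hypothesis, since the constant function $1$ lies in $W^{1,p}(\Omega)$ and its image under the embedding into $L^q(\Omega, w)$ has norm $\|w\|_{L^1(\Omega)}^{1/q} < \infty$. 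Dominated convergence then gives $T(u_n) \to T(u)$, and since $T(u_n) = 0$ for every $n$, the limit $u$ satisfies the integral constraint.

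The remaining subtlety is the non-triviality requirement $u \neq 0$ built into the definition of $V_{p,q}(\Omega, w)$: a weak limit of nonzero elements can be zero, so strictly speaking the object that is weakly closed is $V_{p,q}(\Omega, w) \cup \{0\}$. In the intended application to the Min-Max Principle this causes no trouble, because one works on the normalized slice where $G(u) = 1$, and strong $L^q(\Omega, w)$ convergence preserves the normalization $\|u\|_{L^q(\Omega,w)} = 1$, ruling out $u = 0$ automatically. The main obstacle is therefore the dominated-convergence step, which requires identifying an $L^1(\Omega)$ majorant that controls $|u_n|^{q-2} u_n\, w$ uniformly in $n$; the subsequential a.e.\ majorant $g$ inherited from strong $L^q(\Omega,w)$ convergence, combined via H\"older with the global integrability $w \in L^1(\Omega)$, furnishes exactly such a majorant and closes the argument.
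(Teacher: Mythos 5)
Your proof is correct and follows the same route as the paper: compactness of the embedding upgrades weak $W^{1,p}$ convergence to strong $L^q(\Omega,w)$ convergence, a subsequence gives a.e.\ convergence with an $L^q(\Omega,w)$ majorant $g$, and dominated convergence passes the constraint $\int_\Omega |u_n|^{q-2}u_n w\,dx = 0$ to the limit. You additionally justify two points that the paper leaves implicit: the integrability of the majorant $g^{q-1}w$ (via H\"older, after noting that $w\in L^1(\Omega)$ because the constant function $1 \in W^{1,p}(\Omega)$ must embed into $L^q(\Omega,w)$) and the fact that the weak limit could a priori be $0 \notin V_{p,q}(\Omega,w)$, which is harmless in the later application because the normalization $G(u)=1$ is preserved under strong $L^q(\Omega,w)$ convergence.
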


\begin{proof}
    Again, by the compactness of the embedding operator, we have that $u_n \to u \text{ strongly in } L^q(\Omega, w)$ and, hence, $u_n \to u$ in measure $\omega dx$ on $\Omega$ and there exists a function $g \in L^q(\Omega, w)$ such that $|u_n| \leq g \text{ a.e. in } \Omega$.

    Now consider a sequence $\{|u_n|^{q-2}u_n\}_{n\in \mathbb{N}}$. Then
    $$
        ||u_n|^{q-2}u_n| = |u_n|^{q-1} \leq g^{n-1} \text{ a.e. in } \Omega
    $$
    and $g^{q-1}\cdot w$ is an integrable function that dominates $|u_n|^{q-2}u_n w$. Therefore, by the Lebesgue Dominated Convergence Theorem,
    $$
        0 = \lim\limits_{n\to \infty} \int_\Omega |u_n|^{q-2}u_nw \, dx = \int_\Omega |u|^{q-2}uw \, dx
    $$
    and we end up with $u \in V_{p,q}(\Omega, w)$.
\end{proof}

With the above results, we are ready to prove the variational characterization of the first non-trivial eigenvalue of the problem \eqref{weak}.

\begin{thm}\label{vareigenthm}
    Let a bounded domain $\Omega \subset \mathbb{R}^n$ and a weight function $w\in L^1_{loc}(\Omega)$ be such that the embedding operator $W^{1,p}(\Omega) \hookrightarrow L^q(\Omega, w)$, $1< p \leq q$, is compact. Then the variational characterization of the first non-trivial eigenvalue $\mu_2 = \mu_{p,q}(\Omega, w)$ of the problem \eqref{weak} is
    \begin{equation}\label{vareigen}
        \mu_{p,q}(\Omega, w) = \inf\left\{ \frac{\|\nabla u\|^p_{L^p(\Omega)}}{\|u\|^p_{L^q(\Omega, w)}}: u \in V_{p,q}(\Omega, w) \right\}.
    \end{equation}
    Moreover, this infimum attains on $V_{p,q}(\Omega, w)$, $\mu_{p,q}(\Omega, w)>0$ and
    $$
        \mu_{p,q}(\Omega, w) = (B_{p,q}(\Omega,w))^{-p}
    $$
    where $B_{p,q}(\Omega,w)$ is the best Poincar\'e-Sobolev constant \eqref{poincareweight}.
\end{thm}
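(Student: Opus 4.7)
The plan is to establish the four claims---variational identity, attainment, positivity, and the link $\mu_{p,q}=B_{p,q}^{-p}$---by the direct method of the calculus of variations combined with the Lagrange multiplier rule. Set
\[
\lambda := \inf\left\{\frac{\|\nabla u\|_{L^p(\Omega)}^p}{\|u\|_{L^q(\Omega,w)}^p} : u \in V_{p,q}(\Omega,w)\right\}.
\]

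Strict positivity $\lambda>0$ is immediate from Lemma \ref{bound}: for every $u \in V_{p,q}$ one has $\|u\|_{L^q(\Omega,w)} \leq C\|\nabla u\|_{L^p}$, so the Rayleigh-type quotient is bounded below by $C^{-p}$. For the identity $\lambda = B_{p,q}^{-p}$, I combine Lemma \ref{infimum} with the definition of the best Poincar\'e-Sobolev constant. Since $u \mapsto u-\widetilde u$ is a surjection $W^{1,p}(\Omega)\to V_{p,q}(\Omega,w)$ that preserves $\|\nabla\cdot\|_{L^p}$, and $\|u-\widetilde u\|_{L^q(\Omega,w)}=\inf_{c\in\mathbb{R}}\|u-c\|_{L^q(\Omega,w)}$, one obtains
\[
B_{p,q} = \sup_{u \in W^{1,p}(\Omega)}\frac{\inf_c\|u-c\|_{L^q(\Omega,w)}}{\|\nabla u\|_{L^p}} = \sup_{v \in V_{p,q}}\frac{\|v\|_{L^q(\Omega,w)}}{\|\nabla v\|_{L^p}} = \lambda^{-1/p}.
\]

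For attainment, I choose a minimizing sequence $\{u_n\}\subset V_{p,q}$ normalized by $\|u_n\|_{L^q(\Omega,w)}=1$, so $\|\nabla u_n\|_{L^p}^p \to \lambda$. The chief obstacle is to produce a uniform $W^{1,p}(\Omega)$-bound on $\{u_n\}$ from the gradient bound; I intend to handle this by a contradiction argument paralleling Lemma \ref{bound}: if $\|u_n\|_{W^{1,p}}\to\infty$, rescale to $v_n := u_n/\|u_n\|_{W^{1,p}}\in V_{p,q}$ and use the compact embedding to pass to a strong limit in $L^q(\Omega,w)$ and the weak closure of $V_{p,q}$ (Theorem \ref{closed}) to identify the weak limit of $v_n$ as a constant satisfying the vanishing-weighted-mean condition, which must then be zero, contradicting the normalization. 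Once boundedness is in hand, reflexivity of $W^{1,p}$ gives a weakly convergent subsequence $u_n \rightharpoonup u$; the compact embedding yields $u_n \to u$ strongly in $L^q(\Omega,w)$, so $\|u\|_{L^q(\Omega,w)}=1$ and $u\neq 0$; Theorem \ref{closed} gives $u\in V_{p,q}$; and weak lower semicontinuity of $F$ yields $\|\nabla u\|_{L^p}^p \leq \lambda$, so $u$ attains the infimum.

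Finally, since $u$ minimizes $F$ on the smooth constraint $\{G=1\}$, the Lagrange multiplier rule gives $F'(u)=\tilde\mu\,G'(u)$ for some $\tilde\mu\in\mathbb{R}$; inserting the expressions for $F'$ and $G'$ recovers the weak formulation \eqref{weak} with $\mu_{p,q}=\tilde\mu q/p$, and testing against $v=u$ together with the normalization yields $\mu_{p,q}=\|\nabla u\|_{L^p}^p=\lambda$. Combining this with $\lambda=B_{p,q}^{-p}$ completes the proof.
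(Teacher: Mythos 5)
Your proof tracks the paper's argument closely: direct method of the calculus of variations, Lemma \ref{bound} for positivity, Lemma \ref{infimum} plus the surjection $u\mapsto u-\widetilde{u}$ for the identity $\lambda=B_{p,q}(\Omega,w)^{-p}$, Theorem \ref{closed} for weak closedness of $V_{p,q}(\Omega,w)$, and weak lower semicontinuity of $F$ for attainment. The closing Lagrange-multiplier step, verifying that the minimizer actually solves \eqref{weak} with $\mu_{p,q}=\lambda$, is an addition the paper leaves implicit, and it is a useful one.

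However, the one step the paper glosses over and you correctly isolate, namely boundedness of the minimizing sequence in the \emph{full} $W^{1,p}(\Omega)$ norm, is exactly where your fix fails. After rescaling $v_n := u_n/\|u_n\|_{W^{1,p}(\Omega)}$ under the assumption $\|u_n\|_{W^{1,p}(\Omega)}\to\infty$, you get $\|\nabla v_n\|_{L^p}\to 0$, extract a weak limit $v$, conclude $v$ is constant, and $v=0$ since $v\in V_{p,q}(\Omega,w)$. You then claim this contradicts the normalization. It does not. The constraint $\|u_n\|_{L^q(\Omega,w)}=1$ transfers to $\|v_n\|_{L^q(\Omega,w)}=1/\|u_n\|_{W^{1,p}(\Omega)}\to 0$, so $v_n\to 0$ strongly in $L^q(\Omega,w)$ is \emph{forced}, not contradicted; the weak limit being zero is consistent with everything you know. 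The surviving normalization $\|v_n\|_{W^{1,p}(\Omega)}=1$ gives $\|v_n\|_{L^p(\Omega)}\to 1$, but the only compactness in the hypotheses is into $L^q(\Omega,w)$, which controls nothing about $\|v_n\|_{L^p(\Omega)}$, so there is no contradiction to be had. Closing the gap requires either an \emph{unweighted} Rellich compactness $W^{1,p}(\Omega)\hookrightarrow\hookrightarrow L^p(\Omega)$ (not implied by the stated hypotheses on a general bounded domain) or a separate argument extracting an $L^p(\Omega)$ bound on $u_n$ from the constraints $u_n\in V_{p,q}\cap S$ and $\|\nabla u_n\|_{L^p}$ bounded. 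For fairness, the paper's own proof of this theorem, and its proof of Lemma \ref{bound}, silently assume the same $W^{1,p}$ boundedness and carry the same gap; you deserve credit for noticing it, but the proposed rescaling argument does not repair it.
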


\begin{proof}
    First note, that characterization \eqref{vareigen} equals to
    $$
        \mu_{p,q}(\Omega, w) = \inf \{F(u): u \in V_{p,q}(\Omega, w) \cap S\},
    $$
    where
    $$
         F(u)=\int_{\Omega} |\nabla u|^p, \quad S=\{u \in W^{1,p}: \int_\Omega |u|^qw = 1\}.
    $$

    As it was noted above, for all non-constant $u\in V_{p,q}(\Omega, w) \cap S$, functional $F(u)>0$. Therefore we can take a minimizing sequence $\{u_n\}_{n\in\mathbb{N}}$ for $F$, i.e. 
    $$
        F(u_n) \to \inf\{F(u): u \in V_{p,q}(\Omega, w) \cap S\}.
    $$

    Without loss of generality we can assume that $\|\nabla u_n\|_{L^p(\Omega)} = 1$. Then, by Lemma \ref{bound}, the minimizing sequence is bounded and, by Banach-Alaoglu theorem, we can take a subsequence (if needed) that weakly converges to some $u$ in $W^{1,p}(\Omega)$. Theorem \ref{closed} states that $V_{p,q}(\Omega, w)$ is weakly closed. Therefore, $u_n \rightharpoonup u$, $u \in V_{p,q}(\Omega, w) \cap S$.

    Since $F$ is a weakly lower semicontinuous functional, by the direct method of calculus of variations, we obtain that $F$ attains its positive infimum on  $V_{p,q}(\Omega, w) \cap S$.

    By Lemma \ref{infimum}, we know that
    $$
        V_{p,q}(\Omega,w) = \{v-\widetilde{v}: v\in W^{1,p}(\Omega)\},
    $$
    for $\widetilde{v}$ as in that Lemma. Therefore,
    \begin{multline*}
        \mu_{p,q}(\Omega, w) = \inf\left\{ \frac{\|\nabla u\|^p_{L^p(\Omega)}}{\|u\|^p_{L^q(\Omega, w)}}: u \in V_{p,q}(\Omega, w) \right\} \\
        =\inf\left\{ \frac{\|\nabla v\|^p_{L^p(\Omega)}}{\|v-\widetilde{v}\|^p_{L^q(\Omega, w)}}: v \in W^{1,p}(\Omega) \setminus span\{1\} \right\}.
    \end{multline*}
    This equality implies 
    $$
        \mu_{p,q}(\Omega, w) = (B_{p,q}(\Omega,w))^{-p}.
    $$
\end{proof}

\subsection{Lower estimate for $\mu_{p,q}(\Omega_\gamma, w)$ in the case $w = w_{\gamma}$}

\hskip 12pt

Further we will consider problem \eqref{pwLaplace} on outward cuspidal domains $\Omega_\gamma$. First we consider the case of the problem \eqref{pwLaplace} with the weight 
$$
w(x) = w_{\gamma}(x) = \frac{\gamma-p}{n-p}x_n^{\frac{p(\gamma-n)}{n-p}}
$$
given by the mapping $\varphi: \Omega_n \to \Omega_\gamma$, introduced in the previous section. This case is more interesting and important, since the weight $w_\gamma$ connects the geometry of the domain and the given weighted eigenvalue problem.

For the problem with such weight function we can directly use Theorem \ref{weightemb}. We obtain the following result.

\begin{thm}\label{generalest}
    Let $\Omega_\gamma \subset \mathbb{R}^n$ be a bounded outward cuspidal domain, $n< \gamma < \infty$, $\Omega_n \subset \mathbb{R}^n$ is a simplex. Then the first non-trivial Neumann $(p,q)$-eigenvalue of the problem \eqref{pwLaplace} with $w(x) = w_{\gamma}(x) = \frac{\gamma-p}{n-p}x_n^{\frac{p(\gamma-n)}{n-p}}$ and $1<p<n$, $p \leq q < \frac{pn}{n-p}$, has the lower estimate
    $$
        \mu_{p,q}(\Omega_\gamma, w_\gamma) \geq C(\gamma,p,q) (d(\Omega_n))^{-np} |\Omega_n|^{\frac{pq(n-1)+n(q-1)}{qn}},
    $$
    where $d(\Omega_n)=\sqrt{n}$ is a diameter of the convex domain $\Omega_n$ and
    \begin{multline*}
        C(\gamma,p,q) = \left(\frac{n-p}{\gamma-p}\right)\Big(\frac{1-\delta}{1/n - \delta}\Big)^{p\delta-1} w_n^{\frac{p}{n}-p} \times \\
        \times \left((n-1) \left(\frac{(n-p)(\gamma-1)}{(\gamma-p)(n-1)} -1\right)^2+n-1+\left(\frac{n-p}{\gamma-p}\right)^2\right)^{-\frac{p}{2}}, \quad \delta = \frac{1}{p}+\frac{1}{q}. 
    \end{multline*}
\end{thm}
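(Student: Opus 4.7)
The plan is to chain together three earlier results: the variational characterization (Theorem \ref{vareigenthm}), the weighted Poincar\'e-Sobolev inequality on $\Omega_\gamma$ (Theorem \ref{weightemb}), and the convex-domain constant \eqref{pq_est_c} applied to the simplex $\Omega_n$.

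First, I would verify that the hypotheses of Theorem \ref{vareigenthm} apply. The weight $w_\gamma(x) = \frac{\gamma-p}{n-p} x_n^{p(\gamma-n)/(n-p)}$ is locally integrable on $\Omega_\gamma$, and Theorem \ref{weightemb} provides the compact embedding $W^{1,p}(\Omega_\gamma) \hookrightarrow L^q(\Omega_\gamma, w_\gamma)$ for $1 \leq q < np/(n-p)$, which covers the assumed range $p \leq q < np/(n-p)$. Theorem \ref{vareigenthm} then yields
$$
\mu_{p,q}(\Omega_\gamma, w_\gamma) = (B_{p,q}(\Omega_\gamma, w_\gamma))^{-p},
$$
so the eigenvalue bound reduces to an upper estimate of the weighted Poincar\'e-Sobolev constant on $\Omega_\gamma$.

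Next, I would substitute the explicit bound from Theorem \ref{weightemb},
$$
B_{p,q}(\Omega_\gamma, w_\gamma) \leq B_{p,q}(\Omega_n) \left(\frac{\gamma-p}{n-p}\right)^{1/p} \mathcal{M},
$$
where $\mathcal{M}$ denotes the square-root factor encoding the cuspidal geometry. Taking the $-p$-th power extracts the first two factors of $C(\gamma, p, q)$, namely $(n-p)/(\gamma - p)$ and $\mathcal{M}^{-p}$, and leaves only $(B_{p,q}(\Omega_n))^{-p}$ to be estimated.

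Finally, since $\Omega_n$ is a bounded convex domain, I would apply the Poincar\'e-Sobolev bound \eqref{pq_est_c} with $\Omega_c = \Omega_n$ to control $B_{p,q}(\Omega_n)$ in terms of $d(\Omega_n)$ and $|\Omega_n|$, and raise the result to the $-p$-th power. This contributes the $d(\Omega_n)^{-np}$ prefactor, the factor $w_n^{p/n - p}$, the term $\left((1-\delta)/(1/n - \delta)\right)^{p\delta - 1}$, and a power of $|\Omega_n|$ obtained by combining $(n|\Omega_n|)^{-1}$ and $|\Omega_n|^{1/n - \delta}$ under the $-p$-th power. Collecting all contributions produces the claimed lower bound with constant $C(\gamma, p, q)$. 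I expect the only genuine obstacle to be algebraic bookkeeping, specifically verifying that the combined exponent of $|\Omega_n|$ simplifies to $(pq(n-1) + n(q-1))/(qn)$ and that the numerical factors absorb cleanly into $C(\gamma,p,q)$; no new ideas beyond substitution and simplification should be needed.
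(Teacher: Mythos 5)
Your proposal follows exactly the paper's own argument: invoke Theorem \ref{weightemb} to get the compact weighted embedding, apply Theorem \ref{vareigenthm} to reduce $\mu_{p,q}(\Omega_\gamma,w_\gamma)$ to $(B_{p,q}(\Omega_\gamma,w_\gamma))^{-p}$, insert the explicit bound on $B_{p,q}(\Omega_\gamma,w_\gamma)$ from Theorem \ref{weightemb}, and then control $B_{p,q}(\Omega_n)$ via the convex-domain estimate. The only remaining work is the algebraic collection of exponents, which is also left implicit in the paper's proof, so the two are effectively identical.
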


\begin{rem}
    Of course, the value $d(\Omega_n)$ and $|\Omega_n|$ can be calculated explicitly for our case, but we leave it this way to emphasize the connection between the weighted and non-weighted problem.
\end{rem}

\begin{proof}
    By Theorem \ref{weightemb}, the embedding operator
    $$
        i: W^{1,p}(\Omega_\gamma) \to L^q(\Omega_\gamma, w_\gamma)
    $$
    is compact and we can apply Theorem \ref{vareigenthm}. This implies
    $$
        \mu_{p,q}(\Omega_\gamma, w_\gamma) =\inf\left\{ \frac{\|\nabla u\|^p_{L^p(\Omega_\gamma)}}{\|u\|^p_{L^q(\Omega_\gamma, w_\gamma)}}: u \in V_{p,q}(\Omega_\gamma, w_\gamma) \right\}.
    $$
    In it turn, this means that 
    $$
        \mu_{p,q}(\Omega_\gamma, w_\gamma) = (B_{p,q}(\Omega_\gamma, w_\gamma))^{-p},
    $$
    where $B_{p,q}(\Omega_\gamma,w_\gamma)$ is the best Poincar\'e-Sobolev constant in \eqref{poincareweight}. By Theorem \ref{weightemb}, this constant has an upper estimate
    $$
        B_{p,q}(\Omega_\gamma,w_\gamma) \leq B_{p,q}(\Omega_n)\left(\frac{\gamma-p}{n-p}\right)^{\frac{1}{p}}\sqrt{(n-1) \left(\frac{(n-p)(\gamma-1)}{(\gamma-p)(n-1)} -1\right)^2+n-1+\left(\frac{n-p}{\gamma-p}\right)^2},
    $$
    and, as $\Omega_n$ is a convex domain, by \eqref{pq_est},
    $$
        B_{p,q}(\Omega_n) \leq \frac{(d(\Omega_n))^n}{n|\Omega_n|} \Big(\frac{1-\delta}{1/n - \delta}\Big)^{1-\delta} w_n^{1-\frac{1}{n}}|\Omega_n|^{\frac{1}{n}-\delta}, \quad \delta = \frac{1}{p}+\frac{1}{q},
    $$

    These two estimates give us the stated result.
\end{proof}

If now we consider the case $p=q$, we obtain the estimate for the first non-trivial $p$-eigenvalue of the $p$-Laplace operator. In this case we can use the sharp estimate \eqref{eigen_convex} for the $(p,p)$-Poincar\'e-Sobolev constant.

\begin{cor}\label{p-laplace}
    Let $\Omega_\gamma \subset \mathbb{R}^n$ be a bounded outward cuspidal domain, $n< \gamma < \infty$, $\Omega_n \subset \mathbb{R}^n$ is a simplex. Then the first non-trivial Neumann $p$-eigenvalue of the problem \eqref{pwLaplace} with $w(x) = w_{\gamma}(x) = \frac{\gamma-p}{n-p}x_n^{\frac{p(\gamma-n)}{n-p}}$ and $1<p<n$, $q=p$, has the lower estimate
    $$
        \mu_{p}(\Omega_\gamma, w_\gamma) \geq C(\gamma,p) (d(\Omega_n))^{-p},
    $$
    where $d(\Omega_n)$ is a diameter of the convex domain $\Omega_n$ and
    $$
        C(\gamma,p) = \left(\frac{n-p}{\gamma-p}\right) \left((n-1) \left(\frac{(n-p)(\gamma-1)}{(\gamma-p)(n-1)} -1\right)^2+n-1+\left(\frac{n-p}{\gamma-p}\right)^2\right)^{-\frac{p}{2}}. 
    $$
\end{cor}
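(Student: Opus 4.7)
The plan is to repeat the argument of Theorem \ref{generalest} verbatim with $q=p$, but to replace the generic Talenti-type bound \eqref{pq_est} on the convex simplex $\Omega_n$ by the sharp Esposito-Nitsch-Trombetti estimate \eqref{eigen_convex}, which is available precisely because $p=q$. In other words, the new ingredient is not a new inequality on $\Omega_\gamma$, but a sharper ingredient on the model convex domain $\Omega_n$ that the composition operator $\varphi_a$ pulls the problem back to.

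First I would check the hypotheses of the Min-Max characterization. Since $1<p<n$ one has $p<p^\ast=np/(n-p)$, so Theorem \ref{weightemb} applied at the endpoint $q=p$ supplies both the compact embedding $W^{1,p}(\Omega_\gamma)\hookrightarrow L^p(\Omega_\gamma,w_\gamma)$ and the weighted Poincar\'e-Sobolev inequality \eqref{poincareweight}. Theorem \ref{vareigenthm}, with $w=w_\gamma$ and $q=p$, then yields the identity
$$
   \mu_p(\Omega_\gamma,w_\gamma)=\bigl(B_{p,p}(\Omega_\gamma,w_\gamma)\bigr)^{-p},
$$
so the lower estimate for the eigenvalue is equivalent to an upper estimate for the Poincar\'e constant.

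Next I would chain the two available estimates for this constant. Theorem \ref{weightemb} gives
$$
   B_{p,p}(\Omega_\gamma,w_\gamma)\le B_{p,p}(\Omega_n)\left(\frac{\gamma-p}{n-p}\right)^{1/p}M,
$$
where $M$ denotes the square-root factor appearing in the corollary. In the general $(p,q)$ case Theorem \ref{generalest} controls $B_{p,p}(\Omega_n)$ via \eqref{pq_est}; instead I would invoke \eqref{eigen_convex} on the convex simplex $\Omega_n$, which via $\mu_p(\Omega_n)=B_{p,p}(\Omega_n)^{-p}$ rearranges to $B_{p,p}(\Omega_n)\le d(\Omega_n)/\pi_p$. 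Taking $-p$ powers of the composite bound and isolating the factors gives
$$
   \mu_p(\Omega_\gamma,w_\gamma)\ge \frac{\pi_p^{\,p}}{d(\Omega_n)^{p}}\cdot\frac{n-p}{\gamma-p}\cdot M^{-p},
$$
which is precisely the displayed bound (the $(n-p)/(\gamma-p)$ appears to the first power because the $1/p$-exponent on the cusp factor is inverted by the $p$-th power).

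I do not anticipate any genuine obstacle: the proof is pure bookkeeping on top of Theorems \ref{weightemb} and \ref{vareigenthm}. The only mild care is tracking exponents (the $-p$ operation converts the $\sqrt{\,\cdot\,}$ into a $-p/2$ exponent and the $1/p$ cusp factor into exponent $1$) and verifying compactness at the endpoint $q=p$, which follows immediately from $p<n$.
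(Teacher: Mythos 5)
Your proposal is correct and follows exactly the route the paper signals in the prose preceding the corollary: chain $\mu_{p}(\Omega_\gamma,w_\gamma)=B_{p,p}(\Omega_\gamma,w_\gamma)^{-p}$ from Theorem~\ref{vareigenthm}, the upper bound on $B_{p,p}(\Omega_\gamma,w_\gamma)$ from Theorem~\ref{weightemb}, and the Esposito--Nitsch--Trombetti bound \eqref{eigen_convex} in place of the generic convex estimate \eqref{pq_est}. Note one discrepancy worth recording: your derivation yields the extra factor $\pi_p^{\,p}$ in front of $C(\gamma,p)$, which is what \eqref{eigen_convex} actually produces, whereas the constant printed in the corollary omits it; since $\pi_p\ge 2$ your bound is strictly sharper, and the omission in the paper's formula appears to be a slip rather than a different argument.
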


\subsection{Lower estimate for $\mu_{p}(\Omega_\gamma, w)$ in the case of a general weight $w$}

\hskip 12pt

In the case $p=q$ in \eqref{pwLaplace}, i.e. $p$-eigenvalue problem, we can extend the class of weight functions if we will use the advantages of the embedding in the Lebesgue space with the higher summability $L^q(\Omega_n)$, $q\leq\frac{np}{n-p}$, in Theorem \ref{weightemb}. Unfortunately, in this situation, we cannot reduce the weighted problem to the case of the non-weighted $p$-Laplace operator as in Corollary \ref{p-laplace}, and hence, we cannot use the estimate \eqref{eigen_convex}. 

\begin{thm}
    Let $\Omega_\gamma \subset \mathbb{R}^n$ be a bounded outward cuspidal domain, $n< \gamma < \infty$, $\Omega_n \subset \mathbb{R}^n$ is a simplex. Given the Neumann $p$-eigenvalue problem with $1<p<n$ and $w\in L^1_{\loc}(\Omega_\gamma)$, such that the ratio $\frac{w}{w^{p/q}_{\gamma}} \in L^{\frac{q}{q-p}}(\Omega_\gamma)$ with $q<\frac{np}{n-p}$. Then the first non-trivial $p$-eigenvalue has the lower estimate 
    $$
        \mu_p(\Omega_\gamma) \geq C(\gamma,p,q) \frac{(d(\Omega_n))^{-np} |\Omega_n|^{\frac{pq(n-1)+n(q-1)}{qn}}}{\Big\|\frac{w}{w^{p/q}_{\gamma}}\Big\|^p_{L^{\frac{q}{q-p}}(\Omega_\gamma)}},
    $$
    where $C(\gamma,p,q)$ as in Theorem \ref{generalest}.
\end{thm}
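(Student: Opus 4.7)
The plan is to reduce the general-weight $p$-eigenvalue problem on $\Omega_\gamma$ to the $(p,q)$-problem with the distinguished cusp weight $w_\gamma$ already treated in Theorem \ref{generalest}, by inserting a H\"older step that absorbs the deviation of $w$ from $w_\gamma^{p/q}$. Concretely, for any $u\in W^{1,p}(\Omega_\gamma)$ and any $c\in\mathbb{R}$, write $w=\frac{w}{w_\gamma^{p/q}}\cdot w_\gamma^{p/q}$ and apply H\"older with conjugate exponents $q/p$ and $q/(q-p)$ to obtain
\[
    \int_{\Omega_\gamma}|u-c|^p w\,dx \leq \left(\int_{\Omega_\gamma}|u-c|^q w_\gamma\,dx\right)^{p/q}\left\|\frac{w}{w_\gamma^{p/q}}\right\|_{L^{q/(q-p)}(\Omega_\gamma)}^p.
\]
Taking $p$-th roots and infimizing over $c$ yields
\[
    \inf_{c\in\mathbb{R}}\|u-c\|_{L^p(\Omega_\gamma,w)} \leq \left\|\tfrac{w}{w_\gamma^{p/q}}\right\|_{L^{q/(q-p)}(\Omega_\gamma)}\,\inf_{c\in\mathbb{R}}\|u-c\|_{L^q(\Omega_\gamma,w_\gamma)}.
\]

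Next I would invoke Theorem \ref{weightemb} (which is applicable because $q<np/(n-p)$) to dominate the right-hand side by $B_{p,q}(\Omega_\gamma,w_\gamma)\|\nabla u\|_{L^p(\Omega_\gamma)}$. This establishes the weighted Poincar\'e-Sobolev inequality
\[
    B_{p,p}(\Omega_\gamma,w) \leq B_{p,q}(\Omega_\gamma,w_\gamma)\,\left\|\tfrac{w}{w_\gamma^{p/q}}\right\|_{L^{q/(q-p)}(\Omega_\gamma)}.
\]
Before applying the variational characterization, I need to verify that the embedding $W^{1,p}(\Omega_\gamma)\hookrightarrow L^p(\Omega_\gamma,w)$ is compact: this follows by writing it as the composition of the compact embedding $W^{1,p}(\Omega_\gamma)\hookrightarrow L^q(\Omega_\gamma,w_\gamma)$ from Theorem \ref{weightemb} with the bounded H\"older inclusion $L^q(\Omega_\gamma,w_\gamma)\hookrightarrow L^p(\Omega_\gamma,w)$, which is precisely the integrability hypothesis on $w/w_\gamma^{p/q}$.

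Compactness in hand, Theorem \ref{vareigenthm} gives $\mu_p(\Omega_\gamma,w)=(B_{p,p}(\Omega_\gamma,w))^{-p}$. Raising the displayed inequality to the $(-p)$-th power and substituting the estimate on $B_{p,q}(\Omega_\gamma,w_\gamma)$ from Theorem \ref{weightemb} together with the convex-domain estimate \eqref{pq_est_c} for $B_{p,q}(\Omega_n)$, as packaged in Theorem \ref{generalest}, produces the stated lower bound with the same constant $C(\gamma,p,q)$ and the additional denominator $\|w/w_\gamma^{p/q}\|^p_{L^{q/(q-p)}(\Omega_\gamma)}$.

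The only delicate point is the compactness verification; everything else is bookkeeping. The H\"older step is exactly the mechanism that lets a weaker weight $w$ (one whose singularity is weaker than $w_\gamma$ in an integral sense) still produce a positive lower bound, while simultaneously forbidding us from reaching the critical exponent $q=np/(n-p)$ since we must stay inside the compact regime of Theorem \ref{weightemb}.
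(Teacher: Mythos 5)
Your route is genuinely different from the paper's, and in fact cleaner: the paper transports the estimate through the weak $p$-quasiconformal map $\varphi:\Omega_n\to\Omega_\gamma$ and bounds the operator norm of $(\varphi^{-1})^\ast:L^q(\Omega_n)\to L^p(\Omega_\gamma,w)$ directly, exploiting the identity $|J_{\varphi^{-1}}|=w_\gamma$, whereas you never leave $\Omega_\gamma$ --- you apply H\"older once in $\Omega_\gamma$ to pass from $L^p(\Omega_\gamma,w)$ to $L^q(\Omega_\gamma,w_\gamma)$, then quote the already-established weighted Poincar\'e--Sobolev inequality of Theorem~\ref{weightemb}. Your compactness argument (compact embedding into $L^q(\Omega_\gamma,w_\gamma)$ followed by the bounded H\"older inclusion into $L^p(\Omega_\gamma,w)$) is a correct and economical replacement for the paper's change-of-variables presentation. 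This is a real simplification.

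There is, however, an exponent slip in your first H\"older display. With $f=|u-c|^p w_\gamma^{p/q}$, $g=w/w_\gamma^{p/q}$ and conjugate exponents $q/p$, $q/(q-p)$, H\"older gives
\begin{equation*}
\int_{\Omega_\gamma}|u-c|^p w\,dx \leq \left(\int_{\Omega_\gamma}|u-c|^q w_\gamma\,dx\right)^{p/q}\left\|\frac{w}{w_\gamma^{p/q}}\right\|_{L^{q/(q-p)}(\Omega_\gamma)},
\end{equation*}
with the last factor to the \emph{first} power, not the $p$-th: the $L^{q/(q-p)}$-norm already \emph{is} $\bigl(\int g^{q/(q-p)}\bigr)^{(q-p)/q}$. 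Taking $p$-th roots then produces $\bigl\|w/w_\gamma^{p/q}\bigr\|^{1/p}$, not $\bigl\|w/w_\gamma^{p/q}\bigr\|$, in the Poincar\'e constant, and after raising to the power $-p$ the eigenvalue bound acquires $\bigl\|w/w_\gamma^{p/q}\bigr\|^{-1}$ in the denominator rather than the $\bigl\|w/w_\gamma^{p/q}\bigr\|^{-p}$ in the statement. You should be aware that the paper's own derivation lands on $\bigl\|w/w_\gamma^{p/q}\bigr\|^{-p}$ via an analogous slip: from $\|v\circ\varphi^{-1}\|^p_{L^p(\Omega_\gamma,w)}\leq\|v\|^p_{L^q(\Omega_n)}\bigl\|w/w_\gamma^{p/q}\bigr\|_{L^{q/(q-p)}}$ the correct operator-norm bound is $\|(\varphi^{-1})^\ast\|\leq\bigl\|w/w_\gamma^{p/q}\bigr\|^{1/p}_{L^{q/(q-p)}}$, but the paper reads off the first power. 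Your proof and the paper's agree only because they share this arithmetic error; a careful H\"older step delivers exponent $1$, not $p$, on $\bigl\|w/w_\gamma^{p/q}\bigr\|_{L^{q/(q-p)}}$ in the final estimate. Everything else in your proposal (the compactness, the appeal to Theorem~\ref{vareigenthm}, the substitution of the constants from Theorem~\ref{generalest}) is sound.
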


\begin{proof}
    As in the proof of Theorem \ref{weightemb}, let us consider a mapping 
    $$
    \varphi := \varphi_{\frac{n-p}{\gamma-p}}: \Omega_n \to \Omega_\gamma
    $$ 
    of the form
    $$
        \varphi(y) = (y_1 y_n^{\frac{\sigma(n-p)}{\gamma-p} - 1}, \dots, y_{n-1}y_n^{\frac{\sigma(n-p)}{\gamma-p} - 1}, y_n^{\frac{n-p}{\gamma-p}}), \quad \sigma=\frac{\gamma-1}{n-1}
    $$
    and the corresponding composition operators
    $$
        \varphi: W^{1,p}(\Omega_\gamma) \to W^{1,p}(\Omega_n), \quad \varphi^\ast(u) = u \circ \varphi,
    $$
    and
    $$
        (\varphi^{-1})^\ast: L^q(\Omega_n) \to L^p(\Omega_\gamma, w), \quad (\varphi^{-1})^\ast(v) = v \circ \varphi^{-1}.
    $$
    
    By Theorem \ref{weightemb}, we have the compactness of the operator $\varphi^\ast \cdot i_{\Omega_n}: W^{1,p}(\Omega_\gamma) \to L^q(\Omega_n)$ if $q < \frac{pn}{n-p}$. If we multiply this operator on the continuous linear operator $(\varphi^{-1})^\ast$, then the embedding 
    $$
    i_{\Omega_\gamma}: W^{1,p}(\Omega_\gamma) \to L^p(\Omega_\gamma, w)
    $$
    will also be compact and we can use Theorem \ref{vareigenthm}.

    Therefore, we need to show that under the assumptions of the theorem, $(\varphi^{-1})^\ast$ is bounded. For any function $v \in L^q(\Omega_n)$,
    \begin{multline*}
        \|v \circ \varphi^{-1}\|^p_{L^p(\Omega_\gamma, w)} = \int_{\Omega_\gamma} |v(\varphi^{-1}(x))|^p w(x) \, dx \\
        = \int_{\Omega_\gamma} |v(\varphi^{-1}(x))|^p w(x) \frac{|J_{\varphi^{-1}}(x)|^{\frac{p}{q}}}{|J_{\varphi^{-1}}(x)|^{\frac{p}{q}}} \, dx.
    \end{multline*}

    Using H\"older's inequality and the change of variable formula, we obtain
    \begin{multline*}
        \|v \circ \varphi^{-1}\|^p_{L^p(\Omega_\gamma, w)} \leq \Big(\int_{\Omega_\gamma} |v(\varphi^{-1}(x))|^q |J_{\varphi^{-1}}(x)| \, dx\Big)^{\frac{p}{q}} \Big(\int_{\Omega_\gamma} \big( \frac{w(x)}{|J_{\varphi^{-1}}(x)|^{\frac{p}{q}}} \big)^{\frac{q}{q-p}} \, dx \Big)^{\frac{q-p}{q}} \\
        = \|v\|_{L^q(\Omega_n)}\Big(\int_{\Omega_\gamma} \big( \frac{w(x)}{|J_{\varphi^{-1}}(x)|^{\frac{p}{q}}} \big)^{\frac{q}{q-p}} \, dx \Big)^{\frac{q-p}{q}}.
    \end{multline*}

    Hence, $(\varphi^{-1})^\ast$ is bounded, if $\frac{w}{w^{p/q}_{\gamma}} \in L^{\frac{q}{q-p}}(\Omega_\gamma)$. Moreover, the norm of the operator $(\varphi^{-1})^\ast$ has the estimate
    $$
        \|(\varphi^{-1})^{\ast}\| \leq \Big\|\frac{w}{w^{p/q}_{\gamma}}\Big\|_{L^{\frac{q}{q-p}}(\Omega_\gamma)}.
    $$
    
    This also means that the Poincar\'e-Sobolev inequality
$$
    \inf_{c \in \mathbb{R}}\|u-c\|_{L^q(\Omega_{\gamma}, w)} \leq B_{p,q}(\Omega_\gamma, w)\|\nabla u\|_{L^p(\Omega_{\gamma})}
$$
holds with the constant 
$$
    B_{p,q}(\Omega_\gamma, w) \leq B_{p,q}(\Omega_n) \cdot \|(\varphi^{-1})^{\ast}\| \cdot K_p(\varphi;\Omega_n)
$$

Therefore, the estimate for the first non-trivial Neumann eigenvalue in this case is
    $$
        \mu_p(\Omega_\gamma) \geq C(\gamma,p,q) \frac{(d(\Omega_n))^{-np} |\Omega_n|^{\frac{pq(n-1)+n(q-1)}{qn}}}{\Big\|\frac{w}{w^{p/q}_{\gamma}}\Big\|^p_{L^{\frac{q}{q-p}}(\Omega_\gamma)}}.
    $$
\end{proof}

\vskip 0.3cm

Alexander Menovschikov; Department of Mathematics, HSE University, Moscow, Russia
 
\emph{E-mail address:} \email{menovschikovmath@gmail.com} \\

Alexander Ukhlov; Department of Mathematics, Ben-Gurion University of the Negev, P.O.Box 653, Beer Sheva, 8410501, Israel 
							
\emph{E-mail address:} \email{ukhlov@math.bgu.ac.il}

\end{document}